\newtheorem{theorem}{Theorem}
\newtheorem{lemma}{Lemma}
\newtheorem{assumption}{Assumption}
\theoremstyle{remark}
\newtheorem{example}{Example}
\begin{document}
\large
\thispagestyle{empty}
%\title[Динамическое назначение цен на ресурсы сети]{Динамическое назначение цен на ресурсы сети с большим числом пользователей: стохастический метод проекции градиента}
\title[Resource allocation in communication networks]{Resource allocation in communication networks with large number of users: the stochastic gradient descent method}
\author{D.B.\,Rokhlin}
\email{dbrohlin@sfedu.ru}
\address{Southern Federal University, Rostov-on-Don}
\date{}
\thanks{The research is supported by the Russian Science Foundation, project 17-19-01038.}
\maketitle
We consider a communication network with fixed number of links, shared by large number of users. The resource allocation is performed on the basis of an aggregate utility maximization in accordance with the popular approach, proposed by Kelly and coauthors \cite{Kelly1998}. The problem is to construct a pricing mechanism for transmission rates to stimulate an optimal allocation of the available resources.

In contrast to the usual approach, the proposed algorithm does not use the information on the aggregate traffic over each link. Its inputs are the total number $N$ of users, the link capacities and optimal myopic reactions of randomly selected users to the current prices. The dynamic pricing scheme is based on the dual projected stochastic gradient descent method. For a special class of utility functions $u_i$ we obtain upper bounds for the amount of constraint violation and the deviation of the objective function from the optimal value. These estimates are uniform in $N$ and are of order $O(T^{-1/4})$ in the number $T$ of reaction measurements. We present some computer experiments for quadratic utility functions $u_i$. 

{\it Kew words and phrases}: network utility maximization, duality, stochastic projected gradient descent method, large number of users\\ 

\section*{Introduction}
\setcounter{section}{0}
Contemporary communication networks contain large number of links, whose capacities are shared by huge number of users. Network resource management is aimed to optimally utilize the available resources,  prevent congestion and ensure the stability of the system. Furthermore, to be of practical value the control should be decentralized: users and links are considered as processors, updating their variables based on the dynamically monitored local information. Now conventional optimality criterion, the sum of user utilities, was proposed in \cite{Kelly1998}. In economic terms, this criterion can be called utilitarian, since it corresponds to the maximization of social welfare.

Consider a network with $m$ of links and $N$ users. Each user $i$ transmits packets over a fixed set of links. The network structure is determined by the routing matrix $R=(R_i^j)\in\mathbb R^{m\times N}$. Its columns $R_i\neq 0$, $i=1,\dots,N$ are binary $m$-dimensional vectors such that $R_i^j=1$, if the link $j$ is utilized by the user $i$ and $R_i^j=0$ otherwise. The link capacities are described by a vector $b\in\mathbb R^m$ with strictly positive components. The users evaluate the network quality by the utility functions $u_i(x^i)$, depending on the transmission rates $x^i\in\mathbb R_+$. An optimal resource allocation corresponds to an optimal solution $x^*\in\mathbb R^N_+$ of the network utility maximization (NUM) problem:  
% [Kelly], [Johari,Tsitsiklis], 
\begin{align}
u(x) &=\sum_{i=1}^N u_i(x^i)\to\max,\label{1.1}\\
Rx &=\sum_{i=1}^N R_i x^i\le b,\quad x=(x^1,\dots,x^N)\in\mathbb R_+^N,\label{1.2}
\end{align} 
which was formulated in \cite{Kelly1998}. 

For given link prices $\overline\lambda\in\mathbb R^m_+$, the users select optimal transmission rates $\overline x_i$ maximizing the difference between the utility and price of $\overline x_i$:
\[ \overline x^i\in\arg\max_{x^i\in\mathbb R_+}\left(u_i(x^i)-x^i\sum_{j=1}^m\overline\lambda^j R_i^j \right).\]
The aim of the management is to stimulate this optimal resource allocation $x^*$ by setting the link prices $\lambda^*\in\mathbb R^m_+$. The research related to this problem, its variants and generalizations is reviewed in \cite{Srikant2004,Chiang2007,Shakkottai2008,Srikant2014}.

Under some technical assumptions, including the concavity of the utility functions $u_i$, the existence of the mentioned stimulating prices $\lambda^*$  follow from the duality theory. Moreover, these prices can be approximated using the dual projected gradient descent method: see \cite{Low1999,Nedic2010}. The related computations are completely distributed: each link $j$ updates its price $\lambda^j$ in accordance with the difference between the supply of $b^j $ and total demand
$\sum_{i=1}^N R^j_i x_i$, and each user $i$ updates the transmision rate $x^i$ on the basis of the link prices for the path $\{j:R_i^j=1\}$. 

In this paper the total traffic on the links is not assumed to be known. 
This problem statement makes sense, since the packets from users do not come simultaneously. The asynchronous model of \cite{Low1999} and the model with noisy feedback \cite{Zhang2008} differently address the same problem.

The input data for the dynamic pricing algorithm in question are (1) the total number of users $N$, (2) the link capacities $b$ and (3) the reactions $x^\xi$ of randomly selected users to current prices $\lambda$. It should be emphasized that this approach requires the knowledge of one global parameter: the number of users. The algorithm builds an approximation for the optimal price of $\lambda^*$ on the basis of relatively small number of user reactions. The sequential procedure for constructing this approximation is based on the dual projected \emph{stochastic} gradient descent method.

In section 2 for a special class of utility functions $u_i$ we give upper bounds for the amount of constraint violation and  the deviation of the objective function from the optimal value. These estimates are uniform in $N$ and are of orded $O(T^{-1/4})$ in the number $T$ of measured user reactions. Note that the fast gradient descent method of Nesterov \cite{Nesterov1983}, applied to the problem under consideration in \cite{Beck2014}, bounds the same quantities by $O(T^{-1})$ in the number $T$ of iterations. However, each iteration of the fast gradient descent method requires the knowledge of $N$ user reactions, if they are measured individually. So, for large values of $N$ the proposed algorithm may require much smaller number of user reactions measurements to achieve the desired accuracy. The computer experiments with quadratic utility functions $u_i$, presented in section \ref{sec:3}, illustrate this fact.

We assume that marginal utilities at zero $u_i '(0) $ are finite (and uniformly bounded by $N$). The consequence of this assumption is the fact that a significant proportion of users receive zero optimal data transmission rates. So, we interpret the proposed pricing mechanism as a way to manage an extra traffic. This means that initially each user receives a bandwidth of the order of $\min_{1\le j \le m} b^ j/N$, and only the remaining link capacities are shared according the proposed pricing scheme. However, in what follows we do not consider this aspect.

\emph{Notation.} We do not explicitly distinguish between row and column vectors. The scalar product and Euclidean norm are be denoted as follows: 
\[\langle x,y\rangle=\sum_{i=1}^k x^i y^i,\quad \|x\|=\sqrt{\langle x,x\rangle},\quad x,y\in\mathbb R^k. \]
We use lower indexes for vector numbers and upper indexes for their components. The gradient of a function is written as $g':=(g_{x^1},\dots,g_{x^k})$.

\section{The main result}
\label{sec:2}
\setcounter{equation}{0}
Let us briefly describe the standard approach to the NUM problem (\ref{1.1}), (\ref{1.2}). Consider the Lagrange function
\[ L(x,\lambda)=\sum_{i=1}^N u_i(x^i)+\langle\lambda,b-\sum_{i=1}^N R_i x^i\rangle,\quad (x,\lambda)\in\mathbb R_+^N\times\mathbb R_+^m\]
and the dual objective function
\begin{align*}
q(\lambda)=\sup_{x\in \mathbb R^N_+} L(x,\lambda)=\langle\lambda,b\rangle+\sum_{i=1}^N \sup_{x^i\in\mathbb R_+}(u_i(x^i)-\langle \lambda,R_i \rangle x^i),\quad \lambda\in\mathbb R_+^m,
\end{align*}
Denote by $x^*$ the solution of the primal problem (\ref{1.1}), (\ref{1.2}) and by $\lambda^*$ the solution of the dual problem 
\begin{equation} \label{2.0}
q(\lambda)\to\min_{\lambda\in\mathbb R_+^m}.
\end{equation}
Formally applying the Kuhn-Tucker conditions, we get the relations
\begin{align}
&x^{*,i}\in\arg\max_{x^i\in\mathbb R_+}(u_i(x^i)-\langle\lambda^*,R_i \rangle x^i),\label{2.1}\\
&\lambda^{*,j}\left(b^j-\sum_{i=1}^N R_i^j x^{*,i}\right)=0,\quad j=1,\dots,m.\label{2.2}\\
& \sum_{i=1}^N R_i x^{*,i}\le b,\quad x^{*,i}\in\mathbb R^n_+. \label{2.3} 
\end{align}

The Lagrange multipliers $(\lambda^{*,j})_{j=1}^m$ are interpreted as link prices, and $(R_i^j x^{*,i})_{j=1}^m$ are the optimal transmission rates of $i$-th user. The relations (\ref{2.1}) mean that $x^{*,i}$ are optimal reactions to the prices $\lambda^{*,j}$, $j\in L_i$ on the part of the selfish users, seeking for the ``revenue'' $u_i(x^i)-\langle\lambda^*,R_i x^i\rangle$ maximization. The conditions (\ref{2.3}) ensure the feasibility of $x^*$. The complementary slackness conditions (\ref{2.2}) imply that all links $j$ with non-zero prices $\lambda^{*,j}>0$ are completely utilized: $\sum_{i=1}^N R_i^j x^{*,i}=b^j$.

If the functions $-u_i$ are strongly convex, then the elementary problems
\begin{equation} \label{2.4}
\overline x^i(\lambda)\in \arg\max_{x^i\in\mathbb R_+}(u_i(x^i)-\langle\lambda,R_i \rangle x^i)
\end{equation}
have no more that one solution $\overline x^i(\lambda)$ for any $\lambda\in\mathbb R^m_+$. If such solutions exist, then the function $q$ is differentiable and
\begin{equation} \label{2.5}
q'(\lambda)=b-\sum_{i=1}^N R_i \overline x^i(\lambda)=b-R\overline x(\lambda).
\end{equation}
An optimal solution $\lambda^*$ can be computed by the dual projected gradient descent method:
\begin{equation} \label{2.6}
 \lambda_{t+1}=\left(\lambda_{t}-\eta_t (b-R\overline x(\lambda_t)) \right)^+,\quad t\ge 0,
 \end{equation}
where $\mu^+=(\max\{\mu^j,0\})_{j=1}^m$ and $\eta_t>0$ is a step sequence. If the aggregate demand $\sum_{i=1}^N R_i^j x^i$ is known, then each link $j$ can adjust its price $\lambda_t^j$ according to (\ref{2.6}), using only the local information on its resource demand. Under suitable technical conditions the sequence $\lambda_t$ converges to $\lambda^*$ and $\overline x(\lambda_t)$ converges to $x^*$. For the NUM problem this method was formulated in \cite{Low1999}, see also \cite{Beck2014,Nedic2010,Nesterov2018}.
  
As was already mentioned in the introduction, in this paper the quantities $b-R\overline x(\lambda_t)$ assumed to be unknown. To model the demands coming from random users, we replace the sum $\sum_{i=1}^N R_i \overline x^i(\lambda)$ by a random vector $N R_\xi \overline x^\xi(\lambda)$, where $\xi$ is uniformly distributed on $\{1,\dots,N\}$, and apply the projected stochastic gradient descent method. To bound the approximation errors uniformly in $N$, we impose the following conditions on the utility functions. 

\begin{assumption}  \label{as:1}
The functions $u_i$ are twice continuously differentiable on $(-\varepsilon,\infty)$, $\varepsilon>0$ and satisfy the conditions  
\[ u_i(0)=0,\quad 0<u_i'(0)\le B<\infty,\quad  1\le i\le N. \]
\end{assumption}

\begin{assumption}  \label{as:2}
The functions $-u_i$ are $(N\sigma)$-strongly convex:
\begin{equation} \label{2.7}
-u_i''(x^i) \ge N \sigma,\quad x^i\in\mathbb R_+,\quad \sigma>0.
\end{equation}
\end{assumption}

Although, as is clear from (\ref{2.7}), the functions $u_i$ depend on $N$, for the readability reasons we suppress this dependence in the notation. In what follows the Assumptions \ref{as:1}, \ref{as:2} are supposed to be fulfilled without further commentary.

Our main example is the quadratic utilities
\begin{equation} \label{2.8}
u_i(x^i)=a_i x^i-\frac{N \sigma_i}{2}(x^i)^2.
\end{equation}
To meet the Assumptions \ref{as:1}, \ref{as:2} we require that
\[ 0<a_i\le B,\quad \sigma_i\ge\sigma. \]
If the data trasmission is free, then such utility functions induce the individual demands of order $1/N$:
\[ \hat x^i=\frac{1}{N}\frac{a_i}{\sigma_i}.\] 
So, the users are ``small''. Since usually the resources are scarse, the aggregate demand $\sum_{i=1}^N R_i \hat x^i$ should exceed $b$ componentwise. 

On can regard (\ref{2.8}) as an approximation of a general $(N\sigma_i)$-strongly convex function near the origin. Furthermore, $a_i$ can be considered as a value of the unit transmission rate for the user $i$. The second term in (\ref{2.8}) can be regarded as a penalty, assigned by the network. From (\ref{2.4}) we get
\[ \overline x^i=\frac{(a_i-\langle\lambda,R_i\rangle)^+}{N\sigma_i}.\] 
Thus, the optimal rates (if positive) are proportional to the difference between the value coefficient $a_i$ and the aggregate price $\langle\lambda,R_i\rangle$ of the utilized links. If the users are distinguished only by the values $a_i$ (т.е. $\sigma_i=\sigma$), then the proportionality coefficient are common to all of them. This coefficient takes into account the total number $N$ of users.

Note that the utilities (\ref{2.8}) decrease for large values of the argument. However, the user demands $\overline x^i(\lambda)$ cannot exceed $\hat x^i$ for any price vector $\lambda$. Hence, the users consider $u_i$ only on the intervals $[0,\hat x_i]$, where these functions are increasing. 
%Note also, that one can equivalently consider the utilities
%\[ -\frac{\sigma_i N}{2}\left(x^i-\frac{a_i}{\sigma_i N}\right)^2.\] 

For $f:\mathbb R\mapsto [0,\infty]$ denote by
\[ f^*(z)=\sup_{x\in\mathbb R}(xz-f(x)) \]
the conjugate function. Put
\[ -\overline u_i(y)=\begin{cases}
-u_i(y),& x\in\mathbb R_+,\\
+\infty,& \textrm{otherwise}.
\end{cases}\]
The dual utility function takes the form
\[ q(\lambda)=\langle\lambda,b\rangle+\sum_{i=1}^n (-\overline u_i)^*(-\langle R_i,\lambda)\rangle.\]

Since the function $-u_i$ are strongly convex, the elementary problems (\ref{2.4}) have the unique solutions $\overline x^i(\lambda)$ (see \cite[Theorem 5.25]{Beck2017}).
From the formula for the subdifferential of the conjugate function (see \cite[Corollary 4.21]{Beck2017}):
\[ \partial f^*(z)=\arg\max_{x}(xz-f(x))\]
it follows that the functions $(-\overline u_i)^*(z)$ are differentiable and 
\begin{equation} \label{2.9}
\frac{\partial}{\partial\lambda^j}(-\overline u_i)^*(-\langle R_i,\lambda))=-R_i^j \overline x^i(\lambda).
\end{equation}
Hence, the function $q$ is differentiable and its gradient is given by (\ref{2.5}).
%\[ q'(\lambda)=b-\sum_{i=1}^N R_i \overline x^i(\lambda).\]

The problem (\ref{1.1}),(\ref{1.2}) is solvable, since its set of feasible solutions is compact. By the strong duality theorem (see \cite[Proposition 5.3.6]{Bertsekas2009}) an optimal solution $\lambda^*$ of the dual problem (\ref{2.0}) exists and $u(x^*)=q(\lambda^*).$ Moreover, $(x^*,\lambda^*)$ is a pair of primal and dual optimal solutions, if and only if $x^*$ is feasible, $\lambda^*\ge 0$, and the optimality conditions (\ref{2.3}), (\ref{2.4}) are satisfied: \cite[Proposition 5.3.2]{Bertsekas2009}.
\begin{lemma} \label{lem:1}
An optimal solution $\lambda^*$ of the dual problem satisfies the inequalities
\[ 0\le\lambda^{*,j}\le B,\quad j=1,\dots,m.\]
Furthermore,
\[  \left|\frac{\partial q(\lambda)}{\partial\lambda^j}\right |\le\max\left\{b^j,\frac{B}{\sigma}-b^j\right\},
\quad \lambda\in [0,B]^m.\]
\end{lemma}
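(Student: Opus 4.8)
The plan is to treat the two assertions separately, both resting on the elementary demand functions $\overline x^i(\lambda)$ from (\ref{2.4}) and on the strong concavity of the utilities.

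For the bounds on $\lambda^*$, the lower bound $\lambda^{*,j}\ge 0$ is immediate from the feasible set of the dual problem. For the upper bound I would first record the key monotonicity fact that large prices suppress demand: since Assumption \ref{as:2} gives $u_i''\le -N\sigma<0$, the derivative $u_i'$ is strictly decreasing, so $u_i'(x^i)\le u_i'(0)\le B$ for all $x^i\ge 0$. Hence whenever $\langle\lambda,R_i\rangle> B$ the map $x^i\mapsto u_i(x^i)-\langle\lambda,R_i\rangle x^i$ has strictly negative derivative on $\mathbb R_+$, forcing $\overline x^i(\lambda)=0$. Now suppose $\lambda^{*,j}>B$ for some $j$. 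By the complementary slackness condition (\ref{2.2}), the positivity $\lambda^{*,j}>0$ implies that link $j$ is saturated, $\sum_{i=1}^N R_i^j x^{*,i}=b^j>0$, so some user $i$ with $R_i^j=1$ has $x^{*,i}=\overline x^i(\lambda^*)>0$. But for this user $\langle\lambda^*,R_i\rangle\ge\lambda^{*,j}>B$, which by the previous step yields $\overline x^i(\lambda^*)=0$ --- a contradiction. Thus $\lambda^{*,j}\le B$.

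The second assertion rests on a demand bound that is uniform in $N$. Integrating $u_i''\le -N\sigma$ gives $u_i'(x^i)\le u_i'(0)-N\sigma x^i\le B-N\sigma x^i$. At any positive optimal rate the first-order condition for (\ref{2.4}) reads $u_i'(\overline x^i)=\langle\lambda,R_i\rangle\ge 0$, whence $N\sigma\,\overline x^i\le B$, i.e. $\overline x^i(\lambda)\le B/(N\sigma)$ for every $i$ (the case $\overline x^i=0$ being trivial). Using the gradient formula (\ref{2.5}), the nonnegativity of the demands gives the upper estimate $\partial q/\partial\lambda^j=b^j-\sum_{i=1}^N R_i^j\overline x^i(\lambda)\le b^j$, while $R_i^j\le 1$ together with the demand bound gives $\sum_{i=1}^N R_i^j\overline x^i(\lambda)\le N\cdot B/(N\sigma)=B/\sigma$, hence the lower estimate $\partial q/\partial\lambda^j\ge b^j-B/\sigma$. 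Since $b^j>0$ and the absolute value is convex, the maximum of $|\,\cdot\,|$ over the interval $[\,b^j-B/\sigma,\,b^j\,]$ is attained at an endpoint and equals $\max\{b^j,\,B/\sigma-b^j\}$, which is exactly the claimed bound.

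The only genuinely delicate point is the $N$-uniform demand bound $\overline x^i(\lambda)\le B/(N\sigma)$: it is precisely the $N$-scaling of the strong-convexity modulus in Assumption \ref{as:2} that cancels the factor $N$ arising from the $N$ summands, and this cancellation is what renders the gradient estimate --- and ultimately the error bounds of the paper --- independent of the number of users. Everything else is routine once the monotonicity of $u_i'$ and the first-order conditions (\ref{2.4}) are in hand.
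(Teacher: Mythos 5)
Your proof is correct and follows essentially the same route as the paper: the upper bound on $\lambda^{*,j}$ by contradiction with the complementary slackness condition (\ref{2.2}) via the identification $x^*=\overline x(\lambda^*)$, and the gradient bound via the $N$-uniform demand estimate $\overline x^i(\lambda)\le B/(N\sigma)$. The only (immaterial) difference is that you obtain this demand estimate by integrating $u_i''\le -N\sigma$ and using the first-order condition at a positive maximizer, whereas the paper invokes the strong-monotonicity inequality for $-u_i'$ together with the complementarity relation (\ref{2.10}); both yield exactly (\ref{2.11}).
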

\begin{proof} If $\lambda^*$ is an optimal solution of the dual problem, then the unique solution $x^*$ of the primal problem satisfies the relation
\[ x^*\in\arg\max_{x\in\mathbb R^N_+} L(x,\lambda^*)\]
(see \cite[Theorem 12.11]{Beck2014a}). By (\ref{2.4}),
$x^*=\overline x(\lambda^*)$.
It follows that $x^*$ is determined by the relations 
\begin{equation} \label{2.10}
u_i'(x^{*,i})-\langle R_i,\lambda^*\rangle \le 0,\quad x^{*,i} \left(u_i'(x^{*,i})-\langle R_i,\lambda^*\rangle\right)=0,\quad 1\le i\le N
\end{equation}
(see \cite[Example 9.4]{Beck2014a}).

Let $j\in\{1,\dots,m\}$. Recall that the set $I_j=\{i:R_i^j=1\}$ of all users $i$, utilizing the link $j$, is nonempty. If $\lambda^{*,j}>B$, then 
\[ u_i'(x^{*,i})\le u_i'(0)\le B<\lambda^{*,j}\le \langle R_i,\lambda^*\rangle,\quad i\in I_j,  \]
since the functions $x_i\mapsto u_i'(x_i)$ are decreasing.
From (\ref{2.10}) it follows that $x^{*,i}=0$, $i\in I_j$. Hence,
\[\sum_{i=1}^N R_i^j x^{*,i}=\sum_{i\in I_j} R_i^j x^{*,i}=0\]
in contradiction to the complementary slackness condition (\ref{2.2}).

Furthermore, for $(N\sigma)$-strongly convex function $-u_i$ we have the inequality
\[ (-u_i'(x^i)+u_i'(y^i))(x^i-y^i)\ge N\sigma(x^i-y^i)^2,\quad x^i, y^i\in\mathbb R_+\]
(see \cite[Theorem 5.24]{Beck2017}). Put $x^i=\overline x^i(\lambda)$, $y^i=0$:
\[ (-u_i'(\overline x^i)+u_i'(0))\overline x^i\ge N\sigma (\overline x^i)^2,\]
Using the optimality condition (\ref{2.10}) for $\overline x_i(\lambda)$:
\[ \overline x^i\left(u_i'(\overline x^i)-\langle R_i,\lambda\rangle\right)=0,\]
we get
\[ \overline x^i\left(u_i'(0)-\langle R_i,\lambda\rangle\right)\ge N\sigma (\overline x^i)^2.\]
Thus,
\begin{equation} \label{2.11}
\overline x^i\le\frac{1}{N\sigma}\left(u_i'(0)-\langle R_i,\lambda\rangle\right)\le\frac{B}{N\sigma}.
\end{equation}
The inequality
\[  \left|\frac{\partial q(\lambda)}{\partial\lambda^j}\right |=\left|b^j-\sum_{i=1}^N R_i^j\overline x_i\right|\le\max\left\{b^j,\frac{B}{\sigma}-b^j\right\},\]
which follows from (\ref{2.11}), completes the proof.
\end{proof}

Lemma \ref{lem:1} shows that the minimization of the dual objective function can be performed over the hypercube $\Lambda=[0,B]^m$. Let
$\Pi_\Lambda(y)=\arg\min\{\|z-y\|^2:z\in S\}$
be an orthogonal projection onto $\Lambda$:
\[\Pi_\Lambda(y)^j=\begin{cases}
					0,& y^j\le 0,\\
					y^j, & 0\le y^j\le B,\\
					B, & y^j\ge B.
                   \end{cases}
\]
 (see \cite[Example 8.10]{Beck2014a}). 
 %For $q:S\mapsto\mathbb R$ the estimates () give the Lipschitz constant:
%\[ |q(\lambda)-q(\mu)|\le L\|\lambda-\mu\|,\quad L:=\left(\sum_{j=1}^m\max\left\{b^j,\frac{B}{\sigma}-b^j\right\}^2\right)^{1/2}.\]
On some probability space $(\Omega,\mathscr F,\mathsf P)$ consider a sequence $(\xi_r)_{r=1}^\infty$ of independent random variables, uniformly distributed on $\{1,\dots,N\}$:
 \[ \mathsf P(\xi_r=i)=\frac{1}{N},\quad i\in\{1,\dots,N\}.\]
Let $\mathscr F_k=\sigma(\xi_1,\dots,\xi_k)$ be a natural filtration of the process $(\xi_r)_{r=1}^\infty$. 

The recurrence formula
\begin{equation} \label{2.12}
 \lambda_{t+1}=\Pi_\Lambda\left(\lambda_{t}-\eta_t (b- N R_{\xi_{t+1}}\overline x^{\xi_{t+1}}(\lambda_{t}))\right),\quad t\ge 1,\quad \lambda_1\in S.
\end{equation}
with deterministic steps $\eta_t>0$ defines the projected stochastic gradient descent method for the problem
\[ q(\lambda)\to\min_{\lambda\in\Lambda}.\]
Indeed, since the random variable $\lambda_{t}$ is $\mathscr F_t$-measurable, the following conditional expectation can be computed by ``freezing'' $\lambda_t$:
\begin{equation} \label{2.13}
\mathsf E\left(b- N R_{\xi_{t+1}}\overline x^{\xi_{t+1}}(\lambda_t))|\mathscr F_t\right)=b-\sum_{i=1}^N  R_i\overline x^i(\lambda_t)=q'(\lambda_t).
\end{equation}

The argumentation in the proof of the following lemma is similar to
\cite[Theorem 3.1]{Hazan2016}.
\begin{lemma} \label{lem:2}
Let $\lambda_t$ be a sequence generated by the projected stochastic gradient descent method (\ref{2.12}). Then for any decreasing sequence
$\eta_t>0$ the following estimate holds true
\begin{align} 
\mathsf E q(\overline\lambda_T) \le & q(\lambda^*)+\frac{1}{T}\left(\frac{m B^2}{2}\frac{1}{\eta_T}+\frac{L^2}{2}\sum_{t=1}^T\eta_t\right),\label{2.14}\\
\overline\lambda_T:=&\frac{1}{T}\sum_{t=1}^T\lambda_t,\qquad L:=\left(\sum_{j=1}^m\max\left\{b^j,\frac{B}{\sigma}-b^j\right\}^2\right)^{1/2}. \nonumber
\end{align}
%where $\lambda_t$ is the sequence () generated by the projected SGD, and $L$ is defined in ().
\end{lemma}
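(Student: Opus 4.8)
The plan is to follow the standard regret analysis of projected stochastic gradient descent, as signalled by the reference to \cite[Theorem 3.1]{Hazan2016}. Write $\hat g_t := b - N R_{\xi_{t+1}}\overline x^{\xi_{t+1}}(\lambda_t)$ for the stochastic gradient, so that (\ref{2.12}) reads $\lambda_{t+1}=\Pi_\Lambda(\lambda_t-\eta_t\hat g_t)$ and, by (\ref{2.13}), $\mathsf E(\hat g_t\mid\mathscr F_t)=q'(\lambda_t)$. Two structural facts underlie everything. First, $q$ is convex, being a supremum of functions affine in $\lambda$. Second---and this is the point on which uniformity in $N$ hinges---the stochastic gradient admits the deterministic bound $\|\hat g_t\|\le L$ almost surely: for the single sampled user $\xi=\xi_{t+1}$ the $j$-th coordinate of $\hat g_t$ equals $b^j$ if $R_\xi^j=0$ and $b^j-N\overline x^\xi(\lambda_t)$ if $R_\xi^j=1$; using $0\le\overline x^\xi(\lambda_t)\le B/(N\sigma)$ from (\ref{2.11}), the amplification factor $N$ cancels and $|\hat g_t^j|\le\max\{b^j,\,B/\sigma-b^j\}$, whence $\|\hat g_t\|\le L$. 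This cancellation is exactly why Assumption \ref{as:2} carries the factor $N$.

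The core is a one-step inequality. Since $\lambda^*\in[0,B]^m=\Lambda$ by Lemma \ref{lem:1} and $\Pi_\Lambda$ is nonexpansive, expanding the square gives
\[ \|\lambda_{t+1}-\lambda^*\|^2\le\|\lambda_t-\lambda^*\|^2-2\eta_t\langle\hat g_t,\lambda_t-\lambda^*\rangle+\eta_t^2\|\hat g_t\|^2. \]
Isolating the inner product, bounding $\|\hat g_t\|^2\le L^2$, and taking total expectations, I use the tower property together with (\ref{2.13}) to replace $\mathsf E\langle\hat g_t,\lambda_t-\lambda^*\rangle$ by $\mathsf E\langle q'(\lambda_t),\lambda_t-\lambda^*\rangle$ (here $\lambda_t$ is frozen under the conditioning). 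Convexity of $q$ then yields $q(\lambda_t)-q(\lambda^*)\le\langle q'(\lambda_t),\lambda_t-\lambda^*\rangle$, so, writing $D_t:=\mathsf E\|\lambda_t-\lambda^*\|^2$, I obtain
\[ \mathsf E\,q(\lambda_t)-q(\lambda^*)\le\frac{D_t-D_{t+1}}{2\eta_t}+\frac{\eta_t}{2}L^2. \]

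Summing over $t=1,\dots,T$ treats the two terms separately. The $L^2$ terms contribute $\tfrac{L^2}{2}\sum_{t=1}^T\eta_t$ at once. For $\sum_{t=1}^T (D_t-D_{t+1})/(2\eta_t)$ I use summation by parts: reindexing produces $\tfrac12\big[D_1/\eta_1-D_{T+1}/\eta_T+\sum_{t=2}^T D_t(1/\eta_t-1/\eta_{t-1})\big]$. Monotonicity of $(\eta_t)$ makes the increments $1/\eta_t-1/\eta_{t-1}\ge 0$, while $\lambda_t,\lambda^*\in[0,B]^m$ forces $D_t\le mB^2$; dropping the negative term $-D_{T+1}/\eta_T$ and applying the telescoping identity $\sum_{t=2}^T(1/\eta_t-1/\eta_{t-1})=1/\eta_T-1/\eta_1$ collapses the sum to $mB^2/(2\eta_T)$. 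Finally, Jensen's inequality for the convex $q$ gives $q(\overline\lambda_T)\le\frac1T\sum_{t=1}^T q(\lambda_t)$; taking expectations and dividing the accumulated bound by $T$ yields (\ref{2.14}).

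The mechanical steps (expanding the square, telescoping) are routine. The step needing genuine care is the almost sure gradient bound $\|\hat g_t\|\le L$: it is what keeps the constant $L$ independent of $N$ despite the factor $N$ in the single-sample estimator, and it relies crucially on the $N$-scaling of the strong-convexity modulus through (\ref{2.11}). A secondary delicacy is the summation-by-parts bookkeeping for a general decreasing step sequence, where the monotonicity of $(\eta_t)$ and the diameter bound $D_t\le mB^2$ must be combined correctly to produce the clean $mB^2/(2\eta_T)$.
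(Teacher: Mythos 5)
Your proof is correct and follows essentially the same route as the paper: nonexpansiveness of the projection, the tower property with (\ref{2.13}), convexity of $q$, Abel summation with the diameter bound $mB^2$ and monotonicity of $\eta_t$, and Jensen's inequality at the end. The only cosmetic difference is that you state the gradient bound $\|\hat g_t\|\le L$ almost surely and coordinatewise before taking expectations, whereas the paper writes it directly as $\mathsf E\|z_{t+1}\|^2\le L^2$; the underlying estimate via (\ref{2.11}) is identical.
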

\begin{proof} Let $z_t=b-N R_{\xi_t}\overline x^{\xi_t}(\lambda_t)$, $r_t=\lambda_t-\lambda^*$. Using the inequality (\ref{2.11}), we get
\[\mathsf E\left\|z_{t+1}\right\|^2=
\mathsf E\sum_{j=1}^m(b^j-N R_{\xi_{t+1}}^j\overline x^{\xi_{t+1}})^2\le\sum_{j=1}^m\max\left\{b^j,\frac{B}{\sigma}-b^j\right\}^2=L^2.
\]
Furthermore, since by the ``Pythagorean theorem'' 
\[\|\Pi_\Lambda \mu-\lambda^*\|\le\|\mu-\lambda^*\|,\quad\mu\in\mathbb R^m\]
(see \cite[Theorem 2.1]{Hazan2016}), then 
\begin{align*}
\|r_{t+1}\|^2=&\|\lambda_{t+1}-\lambda^*\|^2=\|\Pi_\Lambda(\lambda_t-\eta_t z_{t+1})-\lambda^* \|^2\le\| \lambda_t-\eta_t z_{t+1}-\lambda^*\|^2\\
=&\|r_t\|^2-2\eta_t\langle z_{t+1},\lambda_t-\lambda^*\rangle+\eta_t^2\|z_{t+1}\|^2.
\end{align*}
Using (\ref{2.13}), we get
\begin{align*}
\mathsf E\|r_{t+1}\|^2&=\mathsf E\|r_t\|^2-2\eta_t\mathsf E\langle\mathsf E( z_{t+1}|\mathscr F_t),\lambda_t-\lambda^*\rangle+\eta_t^2\mathsf E\|z_{t+1}\|^2\\
&\le\mathsf E\|r_t\|^2-2\eta_t\mathsf E\langle q'(\lambda_t),\lambda_t-\lambda^*\rangle+\eta_t^2 L^2.
\end{align*}
By the convexity of $q$:
\[ q(\lambda^*)-q(\lambda_t)\ge \langle q'(\lambda_t),\lambda^*-\lambda_t\rangle \]
it follows that
\[ \mathsf E\|r_{t+1}\|^2\le \mathsf E\|r_t\|^2+2\eta_t \mathsf E(q(\lambda^*)-q(\lambda_t))+\eta_t^2 L^2,\]
\[\mathsf E q(\lambda_t)-q(\lambda_*)\le\frac{\mathsf E\|r_t\|^2-\mathsf E\|r_{t+1}\|^2}{2\eta_t}+\frac{L^2\eta_t}{2}.\]
After the summation and rearranging terms, we get 
\begin{align*}
\sum_{t=1}^T(\mathsf E q(\lambda_t)-q(\lambda_*))&\le\frac{1}{2}\left(\frac{1}{\eta_1}\mathsf E\|r_1\|^2 +\left(\frac{1}{\eta_2}-\frac{1}{\eta_1}\right)\mathsf E\|r_2\|^2+\dots\right.\\
&\left.+\left(\frac{1}{\eta_T}-\frac{1}{\eta_{T-1}}\right)\mathsf E\|r_T\|^2-\frac{1}{\eta_T}\mathsf E\|r_{T+1}\|^2\right)+\frac{L^2}{2}\sum_{t=1}^T\eta_t\\
&\le\frac{m B^2}{2}\frac{1}{\eta_T}+\frac{L^2}{2}\sum_{t=1}^T\eta_t.
\end{align*}
Here we used the estimates $\|r_t\|^2\le m B^2$ and the fact that $\eta_t$ is decreasing. Dividing by $T$ and using the convexity of $q$: 
\[q(\overline\lambda_T)\le\frac{1}{T}\sum_{t=1}^T q(\lambda_t),\]
we get the desired estimate (\ref{2.14}).
\end{proof}

The main result of the paper is the following theorem. Its proof uses the ideas of \cite[Theorem 1]{Beck2014}.
\begin{theorem} \label{th:1}
Define $\lambda_t$ by the recurrence formula (\ref{2.12}) with $\eta_t=K/\sqrt{t}$. Then for $\overline\lambda_T=\frac{1}{T}\sum_{t=1}^T\lambda_t$ the following estimates hold true:
\begin{align}
\mathsf E\left(\sum_{i=1}^N R_i^j \overline x^i(\overline\lambda_T)-b^j\right)^+ &\le\sqrt{\frac{2D}{\sigma}}\frac{1}{T^{1/4}},\qquad
D=\frac{m B^2}{2K}+KL^2,\label{2.15}\\
u(x^*)-\mathsf E u(\overline x(\overline\lambda_T))&\le B\sqrt{\frac{2D}{\sigma}}\frac{1}{T^{1/4}} \label{2.16}.
\end{align}
%где $D=\frac{m B^2}{2K}+KL^2.$
\end{theorem}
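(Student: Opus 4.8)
The plan is to turn the scalar dual–accuracy bound of Lemma \ref{lem:2} into the two primal estimates, following the scheme of \cite[Theorem 1]{Beck2014}. First I would specialize Lemma \ref{lem:2} to $\eta_t=K/\sqrt t$. Since $1/\eta_T=\sqrt T/K$ and $\sum_{t=1}^T t^{-1/2}\le 2\sqrt T$, the parenthesis in (\ref{2.14}) equals $\frac{mB^2}{2}\frac{\sqrt T}{K}+\frac{L^2}{2}\,2K\sqrt T=\sqrt T\,D$, so that (\ref{2.14}) becomes $\mathsf E q(\overline\lambda_T)-q(\lambda^*)\le D/\sqrt T$. Everything then reduces to converting this single quantity into (\ref{2.15}) and (\ref{2.16}).

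For the feasibility bound (\ref{2.15}) the key observation is that $q$ is smooth \emph{coordinatewise}. Differentiating (\ref{2.5}) (equivalently using (\ref{2.9})) shows that the second derivative of $t\mapsto q(\lambda+te_j)$ along the $j$-th coordinate direction $e_j$ equals $\sum_{i\in I_j}\bigl(-1/u_i''(\overline x^i)\bigr)$, which is at most $|I_j|/(N\sigma)\le 1/\sigma$, because $-u_i''\ge N\sigma$ and $|I_j|\le N$. Fix a link $j$ on which $\overline x=\overline x(\overline\lambda_T)$ violates the capacity, i.e. $\partial q(\overline\lambda_T)/\partial\lambda^j=b^j-(R\overline x)^j<0$. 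Stepping in the admissible direction $+e_j$ keeps the point in $\mathbb R^m_+$, where $q\ge q(\lambda^*)$, so the one-dimensional quadratic majorant $q(\overline\lambda_T)+t\,\partial_jq(\overline\lambda_T)+t^2/(2\sigma)$ remains $\ge q(\lambda^*)$; minimizing it over $t\ge 0$ gives
\[ \tfrac{\sigma}{2}\bigl((R\overline x)^j-b^j\bigr)^2\le q(\overline\lambda_T)-q(\lambda^*). \]
Taking square roots, then expectations, pulling the root out by Jensen's inequality, and inserting Step~1 yields exactly (\ref{2.15}).

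For the optimality bound (\ref{2.16}) I would use the two Lagrangian optimality properties: $\overline x$ maximizes $L(\cdot,\overline\lambda_T)$ and $x^*$ maximizes $L(\cdot,\lambda^*)$. Weak duality $q(\overline\lambda_T)\ge q(\lambda^*)=u(x^*)$ combined with the identity $q(\overline\lambda_T)=u(\overline x)+\langle\overline\lambda_T,b-R\overline x\rangle$ gives the one-sided estimate
\[ u(x^*)-u(\overline x)\le\langle\overline\lambda_T,b-R\overline x\rangle, \]
whereas the saddle inequality $L(\overline x,\lambda^*)\le L(x^*,\lambda^*)=u(x^*)$, together with complementary slackness (\ref{2.2}), controls the opposite deviation through $\langle\lambda^*,(R\overline x-b)^+\rangle$. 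In both bounds the prices are $\le B$ by Lemma \ref{lem:1}, so what is left is a constraint residual of the type already estimated in Step~2; feeding in (\ref{2.15}) then produces the extra multiplier $B$ in (\ref{2.16}).

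The main obstacle is precisely this last step. The reduction leaves the price-weighted residual $\langle\overline\lambda_T,b-R\overline x\rangle$, whose slack part $\sum_j\overline\lambda_T^j\bigl(b^j-(R\overline x)^j\bigr)^+$ is \emph{not} directly dominated by the violation estimate (\ref{2.15}); making it small requires an approximate complementary-slackness argument to the effect that a link carrying a large averaged price must be nearly saturated. Equally delicate is keeping the constant equal to $B$ times the single-link feasibility bound, rather than letting the number of links multiply into the estimate. This is exactly the place where the ideas of \cite[Theorem 1]{Beck2014} are needed.
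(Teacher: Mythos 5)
Your Step 1 and your derivation of (\ref{2.15}) are sound. The specialization of Lemma \ref{lem:2} to $\eta_t=K/\sqrt t$ matches the paper, and your feasibility argument --- coordinatewise $1/\sigma$-smoothness of $q$ plus the descent-lemma trick $\frac{\sigma}{2}\bigl(\partial_j q(\overline\lambda_T)\bigr)^2\le q(\overline\lambda_T)-q(\lambda^*)$ followed by Jensen --- is a valid alternative to the paper's route and recovers the same constant. The paper instead exploits the $(N\sigma)$-strong concavity of $x\mapsto L(x,\lambda)$: since $\overline x(\lambda)$ maximizes $L(\cdot,\lambda)$ over $\mathbb R^N_+$ and $L(x^*,\lambda)\ge u(x^*)=q(\lambda^*)$ for feasible $x^*$ and $\lambda\ge 0$, one gets the primal proximity bound $\frac{N\sigma}{2}\sum_i(x^{*,i}-\overline x^i(\overline\lambda_T))^2\le q(\overline\lambda_T)-q(\lambda^*)$, and then \emph{both} (\ref{2.15}) and (\ref{2.16}) follow from $\sum_i|x^{*,i}-\overline x^i|\le\sqrt N\bigl(\sum_i(x^{*,i}-\overline x^i)^2\bigr)^{1/2}$, the $\sqrt N$ cancelling against the $1/(N\sigma)$.

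The genuine gap is in your treatment of (\ref{2.16}), and you have correctly diagnosed it yourself: the weak-duality decomposition leaves you with $u(x^*)-u(\overline x)\le\sum_j\overline\lambda_T^j\bigl(b^j-(R\overline x)^j\bigr)^+$, i.e.\ a price-weighted \emph{slack}, which is not dominated by the \emph{violation} controlled in (\ref{2.15}). Closing it would require showing that $\overline\lambda_T$ approximately satisfies complementary slackness, and the available information (convergence of $q(\overline\lambda_T)$ in value only; $q$ is not strongly convex, so $\overline\lambda_T\to\lambda^*$ does not follow) does not give this, at least not with the constant $B$ and without extra factors of $m$. The fix is to abandon the weak-duality route for (\ref{2.16}) and reuse the primal proximity already implicit in your Step 2 in its $\ell^2$ form: by concavity and $0\le u_i'(\overline x^i)\le u_i'(0)\le B$,
\begin{equation*}
u(x^*)-u(\overline x)\le\sum_{i=1}^N u_i'(\overline x^i)(x^{*,i}-\overline x^i)\le B\sqrt N\Bigl(\sum_{i=1}^N(x^{*,i}-\overline x^i)^2\Bigr)^{1/2}\le B\sqrt{\tfrac{2}{\sigma}\bigl(q(\overline\lambda_T)-q(\lambda^*)\bigr)},
\end{equation*}
after which Jensen and Step 1 give (\ref{2.16}) directly. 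This is precisely where the paper's single strong-concavity lemma does the work that your two separate arguments were trying to do.
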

\begin{proof} Since
\[\sum_{t=1}^T\frac{1}{\sqrt t}=\sum_{t=1}^T\int_{t-1}^t\frac{du}{\sqrt t}\le \sum_{t=1}^T\int_{t-1}^t\frac{du}{\sqrt u}=\int_0^T\frac{du}{\sqrt{u}}=2\sqrt{T},\]
by substituting $\eta_t=K/\sqrt{t}$ into (\ref{2.14}), we get
\begin{equation} \label{2.17}
\mathsf E q(\overline\lambda_T)\le q(\lambda^*)+\frac{D}{\sqrt{T}}.
\end{equation}
By the Assumption \ref{as:2} the function $x\mapsto - L(x,\lambda)$ is $(N\sigma)$-strongly convex, that is, the function 
\[- L(x,\lambda)-\frac{\sigma N}{2}\|x\|^2\]
is convex. Hence,
\[\frac{N\sigma}{2}\sum_{i=1}^N (x^i-\overline x^i(\lambda))^2\le L(\overline x(\lambda),\lambda)-L(x,\lambda)\]
(see \cite[Theorem 5.25]{Beck2017}). On the other hand,
\[L(\overline x(\lambda),\lambda)-L(x,\lambda)=q(\lambda)-\sum_{i=1}^N u_i(x^i)-\left\langle\lambda, b-\sum_{i=1}^N R_i x^i\right\rangle.\]
In particular, for the optimal solution $x^*$ of the primal problem (\ref{1.1}), (\ref{1.2}) we have
\[\frac{N\sigma}{2}\sum_{i=1}^N (x^{*,i}-\overline x^i(\lambda))^2\le q(\lambda)-u(x^*)=q(\lambda)-q(\lambda^*).\]
%\[ |x^*-\overline x(\lambda)|^2:=\sum_{i=1}^n|x_i^*-\overline x_i(\lambda)|^2\]
By (\ref{2.17}) it follows that
\begin{equation} \label{2.18}
\mathsf E\sum_{i=1}^N (x^{*,i}-\overline x^i(\overline\lambda_T))^2\le\frac{2}{N\sigma}(\mathsf E q(\overline\lambda_T)-q(\lambda^*))\le \frac{2D}{N\sigma\sqrt T}.
\end{equation}

For the discrepancy in the feasibility conditions we have the estimate
\begin{align*}
 \sum_{i=1}^N R_i^j \overline x^i(\overline\lambda_T)-b^j\le& \sum_{i=1}^N R_i^j\overline x^i(\overline\lambda_T)-\sum_{i=1}^N R_i^j x^{*,i} \le \sum_{i=1}^N |\overline x^i(\overline\lambda_T)-x^{*,i}|\\
\le&\sqrt{N}\left(\sum_{i=1}^N (\overline x^i(\overline\lambda_T)-x^{*,i})^2\right)^{1/2}.
\end{align*}
Using (\ref{2.18}), we get
\[\mathsf E\left(\sum_{i=1}^N R_i^j \overline x^i(\overline\lambda_T)-b^j\right)^2\le N \sum_{i=1}^N (\overline x^i(\overline\lambda_T)-x^{*,i})^2
\le\frac{2D}{\sigma\sqrt T}.\]
This implies (\ref{2.15}). Furthermore,
\begin{align*}
\sum_{i=1}^N (u_i(x^{*,i})-u_i(\overline x^i))&\le\sum_{i=1}^N u_i'(\overline x^i)(x^{*,i}-\overline  x^i) \le B\sum_{i=1}^N |x^{*,i}-\overline  x^i|\\ 
&\le  B\sqrt{N}\left(\sum_{i=1}^N(x^{*,i}-\overline x^i)^2\right)^{1/2}.
\end{align*}
Again using (\ref{2.18}), we get the inequality
\[\mathsf E\left( \sum_{i=1}^N(u_i(x_i^*)-u_i(\overline x_i))\right)^2\le\frac{2B^2 D}{\sigma \sqrt{T}},\]
implying (\ref{2.16}).
\end{proof}

The estimates (\ref{2.15}), (\ref{2.16}) do not depend on the number $N$ of users. This qualitative result is the main point of Theorem \ref{th:1}.

Let $B/\sigma\ge 2\max_{1\le j\le m} b^j$. Then
\begin{equation} \label{2.18B}
L=\left(\sum_{j=1}^m\max\left\{b^j,\frac{B}{\sigma}-b^j\right\}^2\right)^{1/2}\le\frac{B}{\sigma}\sqrt{m}.
\end{equation}
Replace in (\ref{2.15}) the constant $L$ by its upper bound  (\ref{2.18B}):
\[ D=mB^2\left(\frac{1}{2K}+\frac{K}{\sigma^2}\right)\] 
and select the ``optimal'' constant $K$ by minimizing this expression:
\begin{equation} \label{2.18C}
K=\frac{\sigma}{\sqrt 2}.
\end{equation}
This constant will be used in computer experiments in section \ref{sec:3}.

Recall that a continuously differentiable function $f:\mathbb R^m\mapsto\mathbb R$ is called $\beta$-smooth, if
\[ \|f'(x)-f'(y)\|\le\beta\|x-y\|.\]
By Theorem 5.26 from \cite{Beck2017} the functions $\varphi_i(z):=(-\overline u)^*_i(z)$ of a single variable are $1/(N\sigma)$-smooth. By (\ref{2.9}),
\[ q'(\lambda)=b-\sum_{i=1}^N\varphi_i'(-\langle R_i,\lambda\rangle) R_i. \]
Hence the function $q$ is $m/\sigma$-smooth:
\begin{align}
\|q'(\lambda)-q'(\mu)\|&\le\sum_{i=1}^N |\varphi_i'(-\langle R_i,\lambda\rangle)- \varphi_i'(-\langle R_i,\mu\rangle )|\cdot\|R_i\|\nonumber \\
&\le \sum_{i=1}^N\frac{1}{\sigma N} |\langle R_i,\lambda-\mu\rangle| \sqrt{m}\le\frac{m}{\sigma}\|\lambda-\mu\|. \label{2.20}
\end{align}
The constant in this estimate can be refined, using the structure of the network: see \cite[Lemma III.1]{Beck2014}.

Following \cite{Beck2014}, consider the fast gradient descent method of Nesterov \cite{Nesterov1983}:
\begin{equation} \label{2.20A}
 \mu_1=\widehat\lambda_0,\quad \tau_1=1,
\end{equation} 
\begin{equation} \label{2.20B}
\widehat\lambda_t=\left[\mu_t-\frac{\sigma}{m}q'(\mu_t)\right]^+=\left[\mu_t-\frac{\sigma}{m}\left(b-\sum_{i=1}^N R_i \overline x_i(\mu_t)\right)\right]^+,
\end{equation} 
\begin{equation} \label{2.20C}
\tau_{t+1}=\frac{1+\sqrt{1+4\tau_t^2}}{2},\quad
\mu_{t+1}=\widehat\lambda_t+\frac{\tau_t-1}{\tau_{t+1}}(\widehat\lambda_t-\widehat\lambda_{t-1}),
\end{equation} 
where $t\ge 1$. Denote by $L_u=B\sqrt{N}$ the Lipschitz constant of the function $u$:
\begin{align*}
|u(x)-u(y)|&\le\sum_{i=1}^N |u_i'(z^i)(x^i-y^i)|\le \sum_{i=1}^N |u_i'(0)| |x^i-y^i|\\
&\le B \sum_{i=1}^N |x^i-y^i|\le B\sqrt N\|x-y\|
\end{align*}
and by $L_{q'}=m/\sigma$ the Lipschitz constant of the vector-function $q'$: see (\ref{2.20}). For the routing matrix  $R$ we have the estimate 
\[ \left|\sum_{k=1}^N R^j_k x^k\right|\le\sum_{k=1}^N |x^k|\le\sqrt{N}\|x\|.\]
In the notation of \cite{Beck2014} this means that 
\[\|R\|_{2,\infty}:=\max\left\{\max_j\left|\sum_{k=1}^N R^j_k x_k\right|: \|x\|\le 1\right\}\le \sqrt N.\] 
Note also that $\|\widehat\lambda_0-\lambda^*\|\le B\sqrt{m}$. Theorems 1 and 2 of \cite{Beck2014} give the following estimates:
\begin{align}
q(\widehat\lambda_T)-q(\lambda^*) &\le 2L_{q'}\frac{\|\widehat\lambda_0-\lambda^*\|^2}{(T+1)^2}\le\frac{C}{T^2},\quad C=\frac{2m^2 B^2}{\sigma},\nonumber\\
u(x^*)-u(\overline x_t(\widehat\lambda_T)) &\le L_u\sqrt{\frac{2C}{\sigma N}}\frac{1}{T}=2\frac{mB^2}{\sigma T},\label{2.27}\\
\left[\sum_{i=1}^N R_i^j \overline x_i^j(\widehat\lambda_T)-b^j\right]^+ &\le\|R\|_{2,\infty}\sqrt{\frac{2C}{\sigma N}}\frac{1}{T}=2\frac{mB}{\sigma T}.\label{2.28}
\end{align}

The fast gradient descent method will be used in section \ref{sec:3} for comparison with the projected stochastic gradient descent method (\ref{2.12}). As already mentioned in the introduction, the estimates (\ref{2.27}), (\ref{2.28}) are much better than (\ref{2.15}),(\ref{2.16})  in the order of $T$, but each iteration of the fast gradient descent method can be significantly more labor-consuming, than in the method (\ref{2.12}). 

\section{Examples} \label{sec:3}
\setcounter{section}{2}
\setcounter{equation}{0}
\begin{example} \label{ex:1}
To better understand the properties of the optimal solutions corresponding to the quadratic utility functions (\ref{2.8}), consider a network with two links and three users. Assume that the user 1 utilizes both links, and the users 2 and 3 utilize the links 1 and 2 respectively. Let the link capacities be $2$ and $1$. Thus,
\begin{equation} \label{3.0}
R_1=\begin{pmatrix}
1\\
1
\end{pmatrix},\quad 
R_2=\begin{pmatrix}
1\\
0
\end{pmatrix},\quad
R_3=\begin{pmatrix}
0\\
1
\end{pmatrix},\quad 
b=\begin{pmatrix}
2\\
1
\end{pmatrix}.
\end{equation}

This network was considered in \cite[Example 2.3]{Srikant2004} under the assumption that the user utility functions are logarithmic: $u_i(x_i)=\ln x_i$. In this case the solutions of the primal (\ref{1.1}), (\ref{1.2}) and dual (\ref{2.0}) problems look as follows
\[ x^*=\left(\frac{1}{\lambda^{*,1}+\lambda^{*,2}},\frac{1}{\lambda^{1,*}},\frac{1}{\lambda^{*,2}}\right),\quad \lambda^{*,1}=\frac{\sqrt{3}}{1+\sqrt{3}},\quad \lambda^{*,2}=\sqrt{3}.\]
Note, that
\begin{equation} \label{3.1}
 0<\lambda^{*,1}<\lambda^{*,2},\quad 0<x^{*,1}<x^{*,3}<x^{*,2}.
\end{equation} 

Now consider the users with identical utility functions (\ref{2.8}): 
\[ u_i(x^i)=a x^i-\frac{3}{2}\sigma (x^i)^2.\]
An elementary analysis of the optimality conditions (\ref{2.1}) -- (\ref{2.3}) gives the solutions, presented in Table \ref{tab:1}.
\begin{table}[h]
\def\arraystretch{1.2}
\begin{tabular}{c|c|c|c|c}
\hline
$a/\sigma$ & $(0,3/2]$ & $[3/2, 9/2]$ & $[9/2,9]$ & $[9,\infty)$\\
\hline
$\lambda^{*,1}$ &   0                 &      0                                          &    $ 2a/3-3\sigma$          & $a-6\sigma$\\ 
\hline
$\lambda^{*,2}$ &   0                 &     $a-3\sigma/2$           &   $2a/3$                       & $a-3\sigma$\\ 
\hline
$x^{*,1}$            & $a/(3\sigma)$ &     $1/2$          & $1-a/(9\sigma)$         & 0 \\ 
\hline
$x^{*,2}$           &  $a/(3\sigma)$ & $a/(3\sigma)$   &   $1+a/(9\sigma)$      & 2 \\ 
\hline
$x^{*,3}$           &  $a/(3\sigma)$ &   $1/2$              &  $a/(9\sigma)$           & 1 \\  
\hline
\end{tabular}
\vspace{\abovecaptionskip}
\caption{The dependence of the optimal solution on the parameter $a/\sigma$} \label{tab:1}
\end{table}

Note that for all values of the parameter $a/\sigma$ the inequalities (\ref{3.1}) are satisfied at least in the non-strict sense. Furthermore, fix the common value coefficient $a$ and consider $\sigma$ as a penalty, assigned by the network. If the penalty is very high: $a/\sigma\le 3/2$, then the network resources are available for free and the users select identical transmission rates,  since their utility functions are identical. If $a/\sigma\in (3/2,9/2]$, then the first link is available for free. The capacity of the second link is divided equally between the first and  third users. The second user utilizes the remaining capacity of the first link only partially. If $a/\sigma\in (9/2,9]$, then both resources are scarce. In this case the inequalities (\ref{3.1}) are literally satisfied. Finally, for very small penalty:  $a/\sigma>9$ the first user is ``eliminated from the market'', and the remaining two fully utilize the capacities of the corresponding links.
\end{example}

\begin{example}  \label{ex:2}
Consider the ``network'', containing a single link with the capacity $b>0$, utilized by large number $N$ of users with the utility functions\begin{equation} \label{3.2}
u_i(x^i)=a_i x^i-\frac{\sigma N}{2}(x^i)^2,\quad a_i\in (0,B),\quad \sigma>0.
\end{equation}
The problems (\ref{2.4}) are easily solved:
\begin{equation} \label{3.3}
 \overline x^i(\lambda)=\frac{1}{N}\frac{(a_i-\lambda)^+}{\sigma},
\end{equation}
The optimal solution $\lambda^*$ of the dual problem is the solution of the equation
\begin{equation} \label{3.4}
q'(\lambda)=b-\sum_{i=1}^N\frac{1}{N}\frac{(a_i-\lambda)^+}{\sigma}=0.
\end{equation}
The stochastic projected gradient descent method (\ref{2.12}) takes the form
\[ \lambda_{t+1}=\Pi_{[0,B]}\left[\lambda_t-\eta_t(b-N\overline x^{\xi^{t+1}})\right],
\]
\[\Pi_{[0,B]}(y)=\begin{cases}
0,& y\le 0,\\
y,& y\in (0,B),\\
A,& y\ge B.
\end{cases}\]

Let $b=5$, $\sigma=1$, $N=10^5$ and assume that the value coefficients $(a_i)_{i=1}^N$ are uniformly distributed on $(0,B)$, $B=100$. We generated $k=30$ ``populations'' of $N$ users with the utilities (\ref{3.2}). For each of $30$ problem instances the solution $\lambda^*$ of the equation (\ref{3.4}) was obtained by the bisection method {\tt optimize.bisect} from the {\tt scipy} module (Python) with the standard tolerance parameters. The average values of the optimal price, optimal aggregate utility and optimal demand for the free resource equal to
\[ \lambda^*=68.3,\quad u(x^*)=394.4,\quad \sum_{i=1}^N \overline x^i(0)=50.\]
Thus, the demand for the free resource is 10 times higher than the available capacity $b=5$. Since $a_i$ are uniformly distributed on $(0,100)$, the formula (\ref{3.3}) shows that on average almost 70\% of users get zero optimal rate $\overline x^i(\lambda^*)$.

We applied the projected stochastic gradient descent method (\ref{2.12}) with $\lambda_1=0$, $\eta_t=K/\sqrt{t}$, where $K=1/\sqrt{2}$ is defined by (\ref{2.18C}). Consider the relative errors in the optimal price, optimal aggregate demand and optimal network utility:
\begin{align}
&\varepsilon_t^1:=\frac{|\overline\lambda_t-\lambda^*|}{\lambda^*},\quad
\varepsilon_t^2:=\frac{1}{b}\left|\sum_{i=1}^N \overline x^i(\overline\lambda_t)-b\right|,\label{3.5}\\
&\varepsilon_t^3:=\frac{1}{u(x^*)}\left|\sum_{i=1}^N u_i(\overline x^i(\overline\lambda_t))-u(x^*)\right|. \label{3.6}
\end{align}
The values $\varepsilon_T^i$, averaged over the sample of 30 problems, and their maximal values for the same sample, are presented in Table \ref{tab:2}. Note that the errors in the optimal demand and utility are approximately 4 times larger than the errors in the optimal price. Note also that the number of measured transmission rates $T\le 4000$ is significantly smaller than the number of users: $T/N\le 0.04$. 

\begin{table}
\begin{tabular}{c|c|c|c|c|c|c}
\hline
The number & \multicolumn{3}{c|}{Mean relative}& \multicolumn{3}{c}{Maximal relative} \\
of iterations  & \multicolumn{3}{c|}{errors in the}& \multicolumn{3}{c}{errors in the}\\
\cline{2-7}
$T$       & prices & demand & utility & prices & demand  & utility \\
\hline
1000 & 0.0129 & 0.056 & 0.049 & 0.035 & 0.155 & 0.132 \\
\hline
2000 & 0.0078  &  0.034  & 0.029 & 0.019 & 0.082 & 0.072 \\
\hline
4000 & 0.0052  &  0.022  & 0.019 & 0.016 & 0.069 & 0.060\\
\hline
\end{tabular}
\vspace{\abovecaptionskip}
\caption{The relative errors of the projected stochastic gradient descent method} \label{tab:2}
\end{table}

The fast gradient descent method (\ref{2.20A}) -- (\ref{2.20C}) (with $\widehat\lambda_0=0$) requires only few iterations to get a comparable accuracy. For example, the mean relative error in the optimal price equals to $0.0063$ for 10 iterations. However, if the user demands are measured individually, this requires $10^6$ measurements.

For unknown $\lambda^*$ the errors (\ref{3.5}), (\ref{3.6}) are unobservable. We used very simple rule for iteration stopping, which is based on the observable quantities $\overline\lambda_t=\frac{1}{t}\sum_{k=1}^t\lambda_k$:
\begin{equation} \label{3.7}
\tau=\min\left\{t\ge 2:\frac{|\overline\lambda_t-\overline\lambda_{t-1}|}{\overline\lambda_{t-1}}<\delta\right\}.
\end{equation}
For the same sample of 30 problems the results are given in the Table \ref{tab:3}. 

\begin{table}
\def\arraystretch{1.2}
\begin{tabular}{c|c|c|c|c|c}
\hline
$\delta$ & Mean & Мaximal & Minimal & Mean & Maximal \\
& number of  & number of & number of & relative & relative\\
& iterations &  iterations  & iterations & price error  & price error\\
\hline
$10^{-7}$ & 792  & 1697 & 218 & 0.0140 & 0.045 \\
\hline
$10^{-8}$ & 2170 & 4659 & 549 & 0.0091 & 0.045  \\
\hline
$10^{-9}$ & 4954 & 11538 & 768 & 0.0058 & 0.020  \\
\hline
\end{tabular}
\vspace{\abovecaptionskip}
\caption{The relative errors for the stopping rule (\ref{3.7})} \label{tab:3}
\end{table}
\end{example}

\begin{example} \label{ex:3}
Consider a network with the routing matrix and link capacities (\ref{3.0}). In contrast to the Example \ref{ex:1}, assume that there are $N=1.2\cdot 10^5$ users. Let the users with numbers $i\in\{1,\dots,N/3\}$ utilize both links, and the users with the numbers $i\in\{N/3+1,\dots,2N/3\}$ and $i\in\{2N/3+1,\dots,N\}$ utilize the links $1$ and $2$ respectively. It is assumed that the utility functions are of the form (\ref{3.2}), where $\sigma=1$, and $a_i$ are uniformly distributed on $(0,B)$. 
 
As in Example \ref{ex:2}, we generated a sample of $k=30$ problems. For each problem 200 iterations of the fast gradient descent method (\ref{2.20A}) -- (\ref{2.20C}) were performed. The obtained vector $(\lambda_1^*,\lambda_2^*)$ is considered as an exact solution of the dual problem. The corresponding solution $x^*=\overline x(\lambda^*)$ of the primal problem gives the discrepancy in the constraints (\ref{1.2}) of order $10^{-4}$.  

Put $\langle x^*\rangle_{i={k+1}}^{k+r}=\frac{1}{r}\sum_{i=1}^r x_{i+k}^*$. For $B=100$ the computer experiments show that the users, utilizing both links, are eliminated from the market: $\langle x^*\rangle_{i=1}^{N/3}=0$. The resources are shared the by remaining users so that:
\[ \langle x^*\rangle_{i=N/3+1}^{2N/3}=5\cdot 10^{-5},\quad
   \langle x^*\rangle_{i=2N/3+1}^{N}=2.5\cdot 10^{-5}
\]
similarly to the case $a/\sigma>9$ in Example \ref{ex:1}. In this case, however, many elements $x_i^*$, $i>N/3$ are also equal to $0$. For  $B=12$ we get the following results:
\[ \langle x^*\rangle_{i=1}^{N/3}\approx 10^{-5}< \langle x^*\rangle_{i=2N/3+1}^{N}\approx 1.5\cdot 10^{-5}<\langle x^*\rangle_{i=N/3+1}^{2N/3}\approx 4\cdot 10^{-5},\]
similar to the case $a/\sigma\in (9/2,9)$ in Example \ref{ex:1}.

We applied the projected stochastic gradient descent method with $\lambda_1=0$ and $\eta_t=1/\sqrt{2 t}$, as in Example \ref{ex:2}. The errors in the prices and demand, analogous to (\ref{2.5}), are understood componentwise:
 \begin{align*}
&\varepsilon_t^{1,j}:=\frac{|\overline\lambda_t^j-\lambda^{*,j}|}{\lambda^{*,j}},\quad
\varepsilon_t^{2,j}:=\frac{1}{b^j}\left|\sum_{i=1}^N \overline x_i^j(\overline\lambda_t)-b^j\right|.
%&{\tt err}_t(p^*):=\frac{1}{U(x^*)}\left|\sum_{i=1}^N U_i(\overline x_i(\overline\lambda_t))-U(x^*)\right|,
\end{align*}
The errors in the network utility are computed by the formula (\ref{3.6}). The relative errors, averaged over 30 problems of the sample, and their maximal values are given in the Tables \ref{tab:4}, \ref{tab:5}.
\begin{table}[H]
\begin{tabular}{c|c|c|c|c|c|c}
\hline
The number & \multicolumn{3}{c|}{Mean relative}& \multicolumn{3}{c}{Maximal relative} \\
of iterations  & \multicolumn{3}{c|}{errors in the}& \multicolumn{3}{c}{errors in}\\
\cline{2-7}
$T$       & prices & demand & utility & prices & demand  & utility \\
\hline
2000 & 0.104 & 0.023 & 0.011 & 0.249 & 0.060 & 0.037 \\
        & 0.022 & 0.030 &           &  0.061  & 0.083 &         \\
\hline
4000 & 0.080 & 0.018 & 0.008 & 0.247 & 0.071 & 0.032 \\
        & 0.015 & 0.021 &           &  0.037  & 0.056 &         \\
\hline
8000 & 0.048 & 0.012 & 0.006 & 0.140 & 0.031 & 0.017 \\
        & 0.012 & 0.016 &           &  0.033  & 0.036 &         \\
\hline
\end{tabular}
\vspace{\abovecaptionskip}
\caption{The relative errors of the projected stochastic gradient descent method for the network with two links, $B=12$} \label{tab:4}
\end{table}
\begin{table}[H]
\begin{tabular}{c|c|c|c|c|c|c}
\hline
The number  & \multicolumn{3}{c|}{Mean relative}& \multicolumn{3}{c}{Maximal relative} \\
of iterations  & \multicolumn{3}{c|}{errors in the}& \multicolumn{3}{c}{errors in the}\\
\cline{2-7}
$T$       & prices & demand & utility & prices & demand  & utility \\
\hline
2000 & 0.020 & 0.078 & 0.093 & 0.050 & 0.196 & 0.208 \\
        & 0.033 & 0.217 &           &  0.072  & 0.495 &         \\
\hline
4000 & 0.012 & 0.045 & 0.049 & 0.033 & 0.122 & 0.110 \\
        & 0.018 & 0.116 &           &  0.046  & 0.303 &         \\
\hline
8000 & 0.006 & 0.021 & 0.026 & 0.022 & 0.081 & 0.072 \\
        & 0.010 & 0.062 &           &  0.034  & 0.220 &         \\
\hline
\end{tabular}
\vspace{\abovecaptionskip}
\caption{The relative errors of the projected stochastic gradient descent method for the network with two links, $B=100$} \label{tab:5}
\end{table}
\end{example}
\section{Conclusion}
In this paper we used the dual projected stochastic gradient descent method for the pricing of the information transmission rates over the links of a network. The main example of the utility function is the difference between the linear utility, individual for each user, and the quadratic penalty, assigned by the network. The penalty contains a coefficient, which is proportional to the total number $N$ of users. For a class of utility functions, containing the mentioned quadratic functions, we obtained the estimates for the errors in the prices, stimulating an optimal resource allocation, and for the feasibility and  the optimal network utility errors. These estimates are uniform in $N$. We presented computer experiments, confirming that, at least for networks with small number of links, a satisfactory accuracy can be obtained by measuring a relatively small number of individual user reactions to the link prices. 

\bibliographystyle{plain}      
\bibliography{lit_eng}
\end{document}